\newcommand{\inserttitle}{the prime grid contains arbitrarily large empty polygons}
\newif\ifsitem
\newcommand{\setupstar}{%
  \global\sitemfalse
  \let\origmakelabel\makelabel
  \def\sitem{\global\sitemtrue\item}
  \def\makelabel##1{%
    \origmakelabel{\ifsitem\llap{\raisebox{0.17ex}{$\mathbf{\ast}$}\:}\fi##1}%
    \global\sitemfalse}%
} % prepends a star to problem numbers
    \tikzset{vertex/.style={draw, shape=circle, inner sep=1.5pt, minimum size=4pt}}
    \tikzset{<->/.tip={Latex}}
    \tikzset{smallnode/.style={every node/.style={draw, fill=black, shape=circle, inner sep=0pt, minimum size=4pt}}}
    \tikzset{drawnode/.style={fill,shape=circle,inner sep=0pt, minimum size=3pt}}
    \newlength{\circlabelwidth}
    \setlist{nosep}
    \setlist[enumerate]{label=\textup{\arabic*.}}
    \newlist{subprob}{enumerate}{2}
        \setlist[subprob,1]{label={(\roman*)}}
        \setlist[subprob,2]{label={(\arabic*)}}
    \setlist[itemize]{labelindent=10pt,labelwidth=\circlabelwidth,leftmargin=!,label=$\circ$}
    \newlist{problems}{enumerate}{3}
        \setlist[problems,1]{before=\setupstar,label=\textup{\arabic*.}, itemsep=2pt, topsep=8pt,ref=\textup{\arabic*}}
        \setlist[problems,2]{before=\setupstar,label=(\alph*),parsep=0pt}
        \setlist[problems,3]{before=\setupstar,label=(\roman*),parsep=0pt}
    \renewcommand\@makefntext[1]{\leftskip=0em\hskip-0em\@makefnmark\,#1}
    \parbox{\textwidth}{
        \textbf{\textsc{Method}}
        \begin{mdframed}[innerleftmargin=4pt,innerrightmargin=4pt,skipabove=3pt,skipbelow=0pt]
            \BODY
        \end{mdframed}
    }
\theoremstyle{itcaps}
\newtheorem{theorem}{Theorem}
\newtheorem*{theorem*}{Theorem}
\newtheorem{proposition}[theorem]{Proposition}
\newtheorem{lemma}[theorem]{Lemma}
\theoremstyle{solved}
\theoremstyle{caps}
\newtheorem*{definition*}{Definition}
\newtheorem{exampleprimitive}[theorem]{Example}
    \crefname{exercise}{Exercise}{Exercises}
\newtheorem{problem}{Problem}
\theoremstyle{remark}
\numberwithin{equation}{section}
\newcommand*{\newword}[2][]{\emph{#2}\index{%
    \ifx&#1&%
       #2%
    \else%
       #1%
    \fi}%
} % formats & indexes new words
\newcommand*{\oldword}[2][]{#2\index{%
    \ifx&#1&%
       #2%
    \else%
       #1%
    \fi}%
} % formats & indexes another instance of a previously indexed word
\renewcommand*{\P}{\mathbb{P}}
\DeclareMathOperator{\conv}{conv}
\let\phi\varphi
\let\epsilon\varepsilon
\let\oldchi\chi
\renewcommand{\chi}{\raisebox{1pt}{$\oldchi$}}
\title{\inserttitle}
\date{}
\begin{document}

% display math spaces
\setlength{\abovedisplayskip}{6pt plus 2pt minus 4pt}
\setlength{\abovedisplayshortskip}{1pt plus 3pt}
\setlength{\belowdisplayskip}{6pt plus 2pt minus 4pt}
\setlength{\belowdisplayshortskip}{6pt plus 2pt minus 2pt}

\setlength{\parindent}{10pt}

\usetikzlibrary{patterns}

\theoremstyle{itcaps}
\newtheorem*{main-theorem}{Main theorem}

\definecolor{sea green}{RGB}{56,199,137}
\definecolor{sea blue}{RGB}{56,190,199}

\thispagestyle{plain}

\makeatletter

\null
\vspace{-1.5em}

\noindent
    \hfill%
    \vbox{%
        \hsize0.9\textwidth
        \linewidth\hsize
        {
          \hrule height 2\p@
          \vskip 0.25in
          \vskip -\parskip%
        }
        \centering
        {\LARGE\sc the prime grid contains arbitrarily\\
    large empty polygons\par}
        {
          \vskip 0.26in
          \vskip -\parskip
          \hrule height 2\p@
          \vskip 0.10in%
        }
    }%
    \hfill\null%
\makeatother

\vspace{0.5em}

\begin{center}\large
    \scshape Travis Dillon%\\[0.4em]
    %
    %\today
    \\[2em]
\end{center}

\begin{abstract}
    This paper proves a 2017 conjecture of De Loera, La Haye, Oliveros, and Rold\'an-Pensado that the ``prime grid'' $\big\{(p,q) \in \Z^2 : \text{$p$ and $q$ are prime}\big\} \subseteq \R^2$ contains empty polygons with arbitrarily many vertices. This implies that no Helly-type theorem is true for the prime grid.
\end{abstract}

\section{Introduction}

Finding structure and patterns in the prime numbers has been a pastime and pursuit of mathematicians for ages---a quest that is far from complete, but with many exciting recent advances. This paper investigates a specific geometric pattern, not in the primes themselves, but in their two-dimensional Cartesian product.

Using $\P$ to denote the set of prime numbers, we let $\P^2 = \P \times \P$ denote the set of points in $\Z^2$ in which both coordinates are primes. In a paper on discrete geometry, De Loera, La Haye, Oliveros, and Rold\'an-Pensado \cite{Helly-algebraic-subsets} conjectured that $\P^2$ contains arbitrarily large \emph{empty polgyons}: polygons which intersect $\P^2$ at, and only at, their vertices. (See \cref{fig:empty-polygons} for an illustration.)

How did a conjecture on primes come to appear in a geometry paper? One can only guess that the answer lies partly in whim: The primes attract fascination across mathematical boundaries. But the more serious reason is because of the relationship between empty polytopes and Helly-type theorems in geometry.

Helly's theorem is a fundamental property of convex sets in Euclidean space, first proved by Eduard Helly in 1913, though the first published proof is by Johann Radon \cite{Radon-helly-proof}, with Helly's proof following two years later \cite{Helly}.

\begin{theorem*}[Helly]
    If the intersection of a finite family of convex sets in $\R^d$ is empty, then there is a subfamily of at most $d+1$ sets whose intersection is empty.
\end{theorem*}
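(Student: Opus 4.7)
The plan is to prove the contrapositive: if every subfamily of at most $d+1$ sets has nonempty intersection, then so does the entire family. I would proceed by induction on the number of sets $n$, with the case $n \le d+1$ being trivial. For the inductive step, suppose the statement holds for fewer than $n$ sets, where $n \ge d+2$, and consider convex sets $C_1, \ldots, C_n$ in which every $(d+1)$-subfamily meets. Applying the inductive hypothesis to each $(n-1)$-subfamily $\{C_j : j \ne i\}$ produces a point $x_i \in \bigcap_{j \ne i} C_j$ for each index $i \in \{1,\ldots,n\}$.

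The key tool I would invoke is Radon's theorem: any $n \ge d+2$ points in $\R^d$ admit a partition of their index set into $A \sqcup B$ such that $\conv\{x_i : i \in A\} \cap \conv\{x_i : i \in B\} \ne \emptyset$. I would prove this as a lemma from linear algebra: $n \ge d+2$ points in $\R^d$ are affinely dependent, yielding scalars $\lambda_i$, not all zero, satisfying $\sum_i \lambda_i = 0$ and $\sum_i \lambda_i x_i = 0$. Setting $A = \{i : \lambda_i > 0\}$ and $B = \{i : \lambda_i \le 0\}$ and rescaling the $\lambda_i$ produces an explicit common point $p$ of the two convex hulls.

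For any $k \in \{1,\ldots,n\}$, the index $k$ lies in exactly one of $A$ or $B$; by symmetry say $k \in A$. Then every $i \in B$ satisfies $i \ne k$, so $x_i \in C_k$, and convexity of $C_k$ gives $\conv\{x_i : i \in B\} \subseteq C_k$ and hence $p \in C_k$. Since $k$ was arbitrary, $p \in \bigcap_{k} C_k$, closing the induction. The only real obstacle I anticipate is isolating the right linear-algebraic lemma (Radon's theorem); once Radon is in hand, the index-partition argument that shows $p$ lies in every $C_k$ is short and mechanical.
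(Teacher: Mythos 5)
Your argument is correct: it is the classical Radon-partition proof of Helly's theorem --- induction on the number of sets, with Radon's theorem supplying the common point, and the index-partition step showing that point lies in every $C_k$ is exactly right. The paper states Helly's theorem as classical background without reproving it (it cites precisely this argument of Radon's as the first published proof), so there is no divergence to report.
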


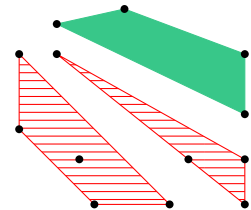
\begin{wrapfigure}{r}{0.25\textwidth}
    \centering
    \begin{tikzpicture}[scale=2]
        \foreach \x/\y [count=\z] in {1.5/0.6, 1.5/1, 0.7/1.3, 0.25/1.2,
            0.25/1, 1.5/0, 1.5/0.3,
            0/0.5, 0/1, 1/0, 0.5/0}
            \coordinate (\z) at (\x,\y) {};
        \fill[draw=sea green, fill=sea green, fill opacity=0.45] (1) -- (2) -- (3) -- (4) -- (1);
        \fill[draw=red, pattern=horizontal lines, pattern color=red, fill opacity=0.3] (5) -- (6) -- (7) -- (5);
        \fill[draw=red, pattern=horizontal lines, pattern color=red, fill opacity=0.3] (8) -- (9) -- (10) -- (11) -- (8);
        \foreach \x in {1,2,...,11}
            \node[drawnode] at (\x) {};
        \node[drawnode] at (1.125,0.3) {};
        \node[drawnode] at (0.4,0.3) {};
    \end{tikzpicture}
    \caption{An empty polygon (in solid green) and two non-empty polygons (in dashed red).}
    \label{fig:empty-polygons}
\end{wrapfigure}

This is a striking property of convex sets. To put it another way: In order to guarantee that the intersection of a finite family of convex sets in $\R^d$ is \emph{nonempty}, it suffices to check that the intersections of all subfamilies of size $d+1$ are nonempty.

Helly's theorem is the basis for a flourishing area of research, one founded on poking and prodding the theorem in all different directions: weakening and strengthening the hypothesis and conclusion, abstracting the notion of convexity, adding quantitative measurements of the size of the intersection, and more. The first survey of this area was in 1963 by Danzer, Gr\"unbaum, and Klee \cite{Danzer-survey}, and a flurry of results in the last 20 years has led to two more surveys in the last 10 years \cite{barany-helly-survey,Soberon-Helly-survey}.

For us, the most important direction will be discrete versions of Helly's theorem, the first of which was proven by Doignon in 1973 \cite{Doignon} (and re-proved a few years later by Bell \cite{bell-doignon} and Scarf \cite{scarf-doignon}):

\begin{theorem*}[Doignon]
    If, in a finite family of convex sets in $\R^d$, the intersection of any subfamily of $2^d$ sets contains a point in $\Z^d$, then the intersection of the entire family contains a point in $\Z^d$.
    % If the intersection of a finite family of convex sets in $\R^d$ contains no point in $\Z^d$, then there is a subfamily of at most $2^d$ sets whose intersection contains no point in $\Z^d$.
\end{theorem*}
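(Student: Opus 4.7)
The plan is a classical pigeonhole--contradiction argument. I would begin by assuming the theorem fails and passing to a minimal counterexample $\mathcal F = \{C_1, \ldots, C_n\}$: a finite family of convex subsets of $\R^d$ with $\bigcap_i C_i \cap \Z^d = \emptyset$, but for which every proper subfamily has a common point of $\Z^d$. Since the hypothesis of the theorem guarantees that every $2^d$-subfamily has a common integer point, it suffices to show $n \le 2^d$.

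Minimality lets me choose, for each $i$, an integer witness $z_i \in \Z^d \cap \bigcap_{j \ne i} C_j$; necessarily $z_i \notin C_i$, or else $z_i$ would be a common integer point of $\mathcal F$. The vectors $z_1, \ldots, z_n$ then satisfy a strong dominance pattern: each $C_i$ contains every $z_j$ with $j \ne i$ but excludes $z_i$. Next I would apply pigeonhole to the parity map $\Z^d \to (\Z/2\Z)^d$: if $n > 2^d$, then two witnesses $z_i, z_j$ (with $i \ne j$) have the same parity, so the midpoint $m = (z_i + z_j)/2$ lies in $\Z^d$. Since both $z_i$ and $z_j$ lie in $C_k$ for every $k \ne i, j$, convexity gives $m \in C_k$ for all such $k$.

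The decisive step is to argue that $m \in C_i \cap C_j$ as well, for then $m \in \bigcap \mathcal F \cap \Z^d$ gives the desired contradiction. This is the main obstacle: it does not follow from convexity alone, since the midpoint of $z_i \notin C_i$ and $z_j \in C_i$ can easily lie outside $C_i$. To resolve this I would refine the witnesses --- for example, forcing each $z_i$ to be a vertex of the convex hull $\conv\{z_1, \ldots, z_n\}$, which the separation hypothesis $z_i \notin C_i \supseteq \{z_j : j \ne i\}$ ensures is possible --- and then exploit the geometry of this lattice polytope. A natural route is a descent argument: if $m \notin C_i$ but $m \in C_j$, use $m$ in place of $z_i$ as a new witness for index $i$; the convex hull of the witnesses strictly shrinks (since $m$ lies interior to the old hull), so since lattice polytope volumes are bounded below the descent terminates, and at termination every midpoint must lie in both $C_i$ and $C_j$, yielding the sought contradiction and the bound $n \le 2^d$.
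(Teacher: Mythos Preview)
Your setup is correct up through the pigeonhole step, and you are right to flag the passage to $m \in C_i \cap C_j$ as the crux. But the descent you sketch does not close this gap. Your replacement rule fires only when $m \notin C_i$ and $m \in C_j$ (or the symmetric case); termination of the descent therefore means only that, for every same-parity pair, either $m \in C_i \cap C_j$ (the conclusion you want) \emph{or} $m \notin C_i$ and $m \notin C_j$. The second alternative is perfectly consistent with your hypotheses---locally, take $z_i = 0 \in C_j = [-1,\tfrac12]$ and $z_j = 2 \in C_i = [\tfrac32,3]$, so that $m = 1$ lies in neither set---and in that case $m$ is not a valid witness for either index (a witness for $i$ must lie in $C_j$, and vice versa), so no replacement is available and the hull does not shrink. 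Your assertion that ``at termination every midpoint must lie in both $C_i$ and $C_j$'' is thus unjustified.

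The paper sidesteps this difficulty entirely: it quotes Hoffman's theorem that, for discrete $S$, the Helly number $h(S)$ equals the maximum number of vertices of an empty polytope in $S$, and then gives the one-line parity argument---among any $2^d+1$ lattice points, two share a parity class, so their midpoint is a lattice point of the convex hull that is not a vertex, hence no empty lattice polytope has more than $2^d$ vertices and $h(\Z^d)\le 2^d$. The subtle step you are wrestling with (getting from the witness configuration to an honest empty polytope) is precisely the nontrivial direction of Hoffman's theorem, which the paper uses as a black box. To salvage your direct route you would have to supply that direction yourself, and that requires an idea beyond the midpoint descent you wrote.
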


So a Helly-type theorem holds for the integer lattice, though it requires checking significantly larger subfamilies. In general, for any given $S \subseteq \R^d$, we can ask whether a Helly-type theorem holds for $S$. The general framework for this question is that of \emph{$S$-Helly numbers}:

\begin{definition*}
    The \emph{Helly number} of a set $S \subseteq \R^d$, denoted $h(S)$, is the minimal number (if it exists), such that the following statement is true: In any finite family of convex sets in $\R^d$, if the intersection every subfamily of at most $h(S)$ sets contains a point of $S$, then the intersection of the entire family contains a point of $S$. (If no such number exists, we write $h(S) = \infty$.)
\end{definition*}

In other words, $h(S)$ is the minimal size of the subfamily condition in a Helly-type theorem for $S$. Helly's theorem says that $h(\R^d) \leq d+1$, while Doignon's theorem says that $h(\Z^d) \leq 2^d$.

As it turns out, Helly numbers for discrete sets are intimately related to the empty polytopes formed by those sets. (A set in $\R^d$ is called \emph{discrete} if it has no accumulation points.) This connection was first described by Hoffman in 1979 \cite{Hoffman-binding-constraints} (and later expanded on by Averkov and Weismantel \cite{Averkov-empty-polytopes}):

\begin{theorem*}[Empty polytopes and Helly-type theorems]
    If $S \subseteq \R^n$ is discrete, then $h(S)$ is equal to the maximal number of vertices of an empty polytope in $S$. \textup{(}If there is no maximum, then $h(S) = \infty$.\textup{)}
\end{theorem*}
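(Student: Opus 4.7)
The plan is to prove both inequalities between $h(S)$ and $\nu(S) := \sup\{|V(P)| : P \text{ is an empty polytope in } S\}$.

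\emph{Direction $h(S) \geq \nu(S)$.} Given an empty polytope $P$ with vertices $v_1, \ldots, v_k$, I set $C_i := \conv(V(P) \setminus \{v_i\})$ for $i = 1, \ldots, k$. Any subfamily of size $k-1$ omits some index $i_0$, and its intersection contains $v_{i_0}$, a point of $S$. On the other hand, $\bigcap_i C_i \subseteq P$, and by the empty polytope hypothesis $P \cap S = V(P)$; since each $v_j$ is extremal in $P$, $v_j \notin C_j$, so no vertex survives the full intersection. Thus $\{C_1, \ldots, C_k\}$ is a family of $k$ convex sets with the $(k-1)$-Helly property for $S$ but empty $S$-intersection, forcing $h(S) \geq k$.

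\emph{Direction $h(S) \leq \nu(S)$.} Consider a minimum-size witness family $\mathcal{F} = \{C_1, \ldots, C_m\}$, so $\bigcap \mathcal{F} \cap S = \emptyset$ while every proper subfamily meets $S$; it suffices to build an empty polytope with $m$ vertices. For each $i$, choose $p_i \in \bigcap_{j \neq i} C_j \cap S$, which exists by minimality, and observe that $p_i \notin C_i$ (else $p_i \in \bigcap \mathcal{F} \cap S$). Since $\{p_j : j \neq i\} \subseteq C_i$ and $C_i$ is convex, $\conv\{p_j : j \neq i\} \subseteq C_i$, so $p_i \notin \conv\{p_j : j \neq i\}$. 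Hence $\{p_1, \ldots, p_m\}$ is in strict convex position and $P := \conv\{p_1, \ldots, p_m\}$ has precisely $m$ vertices. If $P \cap S = \{p_1, \ldots, p_m\}$, then $P$ is an empty polytope with $m$ vertices and $\nu(S) \geq m$.

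\emph{Main obstacle.} The subtle step is ensuring $P$ really is empty. If some $s \in S \cap P \setminus \{p_i\}$ exists, then $s \notin C_k$ for some $k$ (else $s \in \bigcap \mathcal{F} \cap S$), and writing $s = \sum_i \lambda_i p_i$ as a convex combination yields $\{k : s \notin C_k\} \subseteq \{i : \lambda_i > 0\}$, because $\lambda_k = 0$ places $s$ in $\conv\{p_j : j \neq k\} \subseteq C_k$. In the favorable case, $s$ misses a single $C_{k_0}$; then replacing $p_{k_0}$ by $s$ produces a valid new witness configuration, and $p_{k_0}$ is forced out of the new polytope (by extremality combined with $\lambda_{k_0} > 0$), so $|S \cap P|$ strictly drops and iteration terminates in an empty polytope with $m$ vertices. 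The residual difficulty is when every extra $s$ misses at least two sets, blocking the single swap. To resolve this I would look for a simultaneous multi-point exchange, or pass to an inclusion-maximal convex set $K \supseteq \bigcap \mathcal{F}$ disjoint from $S$ and use the supporting-hyperplane structure of $K$ at the $S$-points on $\partial K$ to rigidly encode an empty polytope of the required vertex count.
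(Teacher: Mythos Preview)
The paper does not prove this theorem at all; it is quoted as a known result with citations to Hoffman and to Averkov--Weismantel, so there is no in-paper argument to compare your proposal against.

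On its own merits: your direction $h(S)\ge\nu(S)$ is correct and is the standard construction. The direction $h(S)\le\nu(S)$, however, is genuinely incomplete, and the obstacle you flag is real, not a technicality. A concrete instance: with $m=3$, take $p_1,p_2,p_3$ the vertices of a triangle and let each $C_i$ be the opposite edge $\conv\{p_j:j\neq i\}$; this family is irreducible, yet \emph{every} interior $S$-point $s$ lies outside all three $C_i$, so no single swap $p_k\to s$ returns a tuple in your feasible set $\Pi=\{(q_i):q_i\in\bigcap_{j\neq i}C_j\cap S\}$. Thus the iteration you describe can stall immediately.

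Neither of your suggested repairs is yet an argument. The ``simultaneous multi-point exchange'' is not specified, and it is unclear what invariant it would preserve. For the maximal $S$-free convex body $K\supseteq\bigcap_i C_i$, you give no mechanism linking the boundary structure of $K$ to the number $m$: maximal $S$-free sets are usually analyzed through their facet structure, and there is no reason $\partial K\cap S$ should consist of exactly $m$ points in convex position. The published proofs close the gap with extra work---for example, by enlarging the feasible set to tuples satisfying only $q_i\notin C_i$ (so the swap is always legal and weakly decreases $|Q\cap S|$) and then using minimality a second time to force the $q_i$ to be distinct and in convex position---but that additional argument is absent from your proposal.
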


With this result in hand, determining Helly numbers becomes a much more approachable task. For example, let's prove Doignon's theorem by showing that no collection of $2^d + 1$ points in $\Z^d$ form the vertices of an empty polytope. Among any set $X$ of $2^d + 1$ integer points, there are two points, say $x,y \in \Z^d$, which have the same parity in every coordinate---but then $\frac{1}{2}(x+y)$ is an integer point contained in $\conv(X)$ but is not a vertex, so $\conv(X)$ is not empty. Therefore, $h(\Z^d) \leq 2^d$. (On the other hand, the unit hypercube $[0,1]^d$ \emph{is} empty in $\Z^d$ and has $2^d$ vertices, so $h(\Z^d) \geq 2^d$.)

In this notation, the conjecture of De Loera, La Haye, Oliveros, and Rold\'an-Pensado is that $h(\P^2) = \infty$; in other words, that there is no Helly-type theorem for $\P^2$. If their conjecture were false, that would mean that there is a $k \in \Z$ such that: for any finite family of convex sets in $\R^d$, if every subfamily of size $k$ contains a point in its intersection in which both coordinates are prime, then the intersection of the entire family contains such a point. A statement like this seems ridiculous, which makes it eminently believable that $h(\P^2) = \infty$, as conjectured in \cite{Helly-algebraic-subsets}.

On the other hand, Helly numbers aren't always easily predictable. For example, Ambrus, Balko, Frankl, Jung, and Nasz\'odi \cite{Ambrus-exponential-lattice} recently proved that the Helly number of the exponential grid is finite: $h\big(\{2^n : n \in \N\}^2\big) = 5$. And yet this, phrased as a Helly-type theorem, seems just as strange as a Helly-type theorem for the prime grid. However, the authors of \cite{Ambrus-exponential-lattice} also prove that Fibonacci grid has infinite Helly number: $h\big(\{ F_n : n \in \N \}^2 \big) = \infty$ (where $F_n$ is the $n$th Fibonacci number). The country of Helly-type theorems can be a strange land.

Up to now, the best known bound the Helly number of the prime grid is that $h(\P^2) \geq 14$, obtained by a computer search \cite{Summers-prime-helly}. In this paper, we confirm the conjecture of De Loera, La Haye, Oliveros, and Rold\'an-Pensado and prove that there is no Helly-type theorem for the prime grid:

\begin{main-theorem}
    The prime grid $\P^2$ contains empty polygons with arbitrarily many vertices.
\end{main-theorem}

\cref{sec:main-thm} comprises a proof of this theorem, and \cref{sec:problems} concludes with some related open problems.

\section{Proof of the main theorem}\label{sec:main-thm}

To prove something about the prime grid, we'll need to use some number theory. The question is, what kind? The authors of \cite{Helly-algebraic-subsets} suggest that determining $h(\P^2)$ may be related to the Gilbreath--Proth conjecture on the iterated differences between primes. Although there has been some recent work on it \cite{random-gilbreath}, we seem far from a resolution of this conjecture.

Fortunately, we won't need to resolve the Gilbreath--Proth conjecture in order to determine $h(\P^2)$. Instead, we will rely on the spectacular Maynard--Tao theorem on gaps between primes \cite{Maynard-Tao}.

Our overall strategy is to look for empty polygons close to the main diagonal (the line $y=x$) of the prime grid. In doing so, we can reduce the problem to verifying a condition on the ratios between consecutive prime gaps. The lemma that accomplishes this appeared previously in \cite{Dillon-helly-boxes}; we reproduce it here in a slightly different form.

\begin{lemma}\label{thm:product-sets-lower-bound}
Given a discrete set $A \subseteq \R$, list its elements in increasing order as $a_1 < a_2 < \cdots$. If there are $k$ consecutive elements of $A$ such that
\[ 
    \frac{a_{i+3} - a_{i+2}}{a_{i+2} - a_{i+1}}
    <
    \frac{a_{i+2} - a_{i+1}}{a_{i+1} - a_i}
\]
for each $i \in \{t, t+1, \dots, t+k-3\}$, then $h(A^2) \geq k+1$.
\end{lemma}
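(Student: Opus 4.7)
By the empty-polytopes theorem stated in the introduction, it suffices to exhibit an empty polygon in $A^2$ with $k+1$ vertices. Write $g_i = a_{i+1} - a_i$ and $r_i = g_{i+1}/g_i$, so the hypothesis reads $r_t > r_{t+1} > \cdots > r_{t+k-2}$. My candidate polygon has vertices
\[
    v_0 = (a_t,\, a_t) \qquad\text{and}\qquad v_j = (a_{t+j-1},\, a_{t+j}) \quad (j = 1, 2, \ldots, k),
\]
so that $v_0$ sits on the diagonal $y = x$ and $v_1, \ldots, v_k$ form a chain one step above it.

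Convex position is the easy half. Successive edges of the chain $v_1 \to v_2 \to \cdots \to v_k$ have slopes $r_t, r_{t+1}, \ldots, r_{t+k-2}$, strictly decreasing by hypothesis, so the chain is concave and its $k$ vertices are extreme. The vertex $v_0$ uniquely minimizes the $y$-coordinate among the whole vertex set, so it too is extreme, and $\conv(v_0, v_1, \ldots, v_k)$ is a $(k+1)$-gon.

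For emptiness, I slice the polygon by vertical lines. Any $A^2$-point inside has $x$-coordinate in $\{a_t, a_{t+1}, \ldots, a_{t+k-1}\}$. At $x = a_{t+j}$, the polygon's upper boundary is the vertex $v_{j+1} = (a_{t+j}, a_{t+j+1})$ and its lower boundary sits on the edge $v_k v_0$ at height $a_t + \sigma_j\sigma_k/\sigma_{k-1}$, where $\sigma_j = a_{t+j} - a_t$. Provided this lower value lies strictly between $a_{t+j}$ and $a_{t+j+1}$, the slice meets $A^2$ only at $v_{j+1}$: the first bound excludes the diagonal point $(a_{t+j}, a_{t+j})$, and the second, combined with the consecutiveness of $a_{t+j}$ and $a_{t+j+1}$ in $A$, leaves no room for another $A$-value. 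The first bound is immediate from $\sigma_k > \sigma_{k-1}$, and the second reduces, after clearing denominators, to $\sigma_j/g_{t+j} < \sigma_{k-1}/g_{t+k-1}$.

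The main obstacle is this last inequality. I plan to prove the stronger statement that the full sequence $\sigma_j/g_{t+j}$ is strictly increasing in $j$, by induction. The step $\sigma_j/g_{t+j} < \sigma_{j+1}/g_{t+j+1}$ simplifies to $(r_{t+j}-1)\sigma_j < g_{t+j}$, and substituting the inductive bound on $\sigma_j$ reduces it cleanly to the single comparison $r_{t+j} < r_{t+j-1}$, which is exactly the hypothesis. Once this monotonicity is established, every vertical slice contains no $A^2$-point beyond a vertex, and the polygon is empty, giving $h(A^2) \geq k+1$.
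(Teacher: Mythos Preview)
Your argument is correct and follows the paper's approach: build an empty polygon hugging the diagonal with the chain $(a_i,a_{i+1})$ as upper boundary, the decreasing ratios guaranteeing concavity. Your construction uses a single diagonal anchor $(a_t,a_t)$ where the paper uses two, $(a_b,a_b)$ and $(a_{b+1},a_{b+1})$, and the paper then simply asserts emptiness ``by construction'' without your slice-by-slice verification; but the underlying idea is the same.

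One small redundancy worth noting: once you have established convex position, each $v_{j+1}$ already lies strictly above the bottom edge $v_0v_k$, and that is exactly your ``second bound'' $a_t+\sigma_j\sigma_k/\sigma_{k-1}<a_{t+j+1}$. So the monotonicity induction on $\sigma_j/g_{t+j}$, while correct, is re-proving part of convex position; only the first bound (excluding $(a_{t+j},a_{t+j})$, immediate from $\sigma_k>\sigma_{k-1}$) actually needs a separate check for emptiness.
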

\begin{proof}
We define the set
\[
     X = \{(a_b,a_b),(a_{b+1},a_{b+1})\} \cup \{(a_i,a_{i+1}) : b \leq i \leq b+k-1\}
\]
and the polygon $P = \conv(X)$. (See \cref{fig:near-diag-polygon} for an illustration of $P$.) By construction, $P$ is empty, and the condition on the ratios of differences guarantees that each point in $X$ is a vertex of $P$. Therefore $h(A^2) \geq |X| = k+1$.
\end{proof}

\begin{figure}[b]
    \centering
    \begin{tikzpicture}[scale=0.75]
        \draw[blue, opacity=0.6, thick, densely dashed] (-0.5,-0.5) -- (5.5,5.5);
        \draw[sea green,very thick,fill=sea green, fill opacity=0.45] (0,0) -- (0,0.75) -- (0.75,2.25) -- (2.25,4) -- (4,5) -- (0.75,0.75) -- (0,0);
        \foreach \x [count=\count] in {0,0.75,2.25,4,5}
            {
            \node at (\x,-1) {$a_\count$};
            \node at (-1,\x) {$a_\count$};
            \foreach \y in {0,0.75,2.25,4,5} 
                \node[draw, shape=circle, fill=black, inner sep=0pt, minimum size=2.75pt] at (\x,\y) {};
            }
    \end{tikzpicture}
    \caption{The polygon $P$ in the proof of \cref{thm:product-sets-lower-bound}, with $k=5$.}
    \label{fig:near-diag-polygon}
\end{figure}
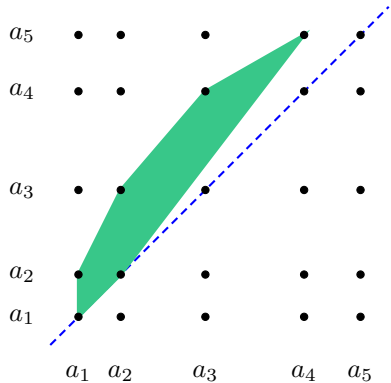

To prove the main theorem, we now need to verify that the ratio condition of \cref{thm:product-sets-lower-bound} holds for arbitrarily long sequences of consecutive primes. We will use the following result, which was developed from the Maynard--Tao theorem \cite{Maynard-Tao} in order to prove a conjecture of Erd\H{o}s and Tur\'an \cite{erdos-prime-gap}. In the following theorem, a set of integers $\{a_1,\dots,a_m\}$ is called \emph{admissible} if for every prime $p$, there is an $n \in \Z/p\Z$ such that $n \notin \{ a_i \text{\,mod\,} p : 1 \leq i \leq m\}$.

\begin{theorem}[Banks--Frieberg--Turnage-Butterbaugh \cite{consecutive-primes}]\label{thm:consecutive-primes}
    For every $k \in \N$ there is an $m \in \N$ such that, for all admissible sets $a_1,a_2,\dots, a_m \in \N$, there is a subset $\{b_1,b_2,\dots,b_k\} \subseteq \{a_1,a_2,\dots,a_m\}$ and infinitely many integers $n$ such that $n+b_1, n+ b_2, \dots, n+b_k$ are consecutive primes.
\end{theorem}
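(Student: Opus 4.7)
The plan is to upgrade the Maynard--Tao sieve, which delivers many primes inside an admissible tuple, into a sieve that additionally forbids primes at non-admissible positions in the relevant interval, so that the selected primes must be \emph{consecutive}. I would start from the Maynard--Tao construction: for an admissible $m$-tuple $\{a_1, \ldots, a_m\}$ there exist nonnegative weights $w_n$, supported in a carefully chosen residue class modulo $W = \prod_{p \leq D} p$, that satisfy
\[
    \sum_n w_n \sum_{i=1}^m \mathbf{1}_{\P}(n + a_i) \ \geq \ \rho(m) \sum_n w_n, \qquad \rho(m) = \Theta(\log m).
\]
Taking $m$ large enough that $\rho(m) > k$ already recovers the original Maynard--Tao theorem: infinitely many $n$ for which at least $k$ of the $n + a_i$ are prime.

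To force the primes to be consecutive, I would introduce a correction term counting primes at positions $t \in (0, a_m] \setminus \{a_1, \ldots, a_m\}$. A Selberg upper-bound sieve, combined with the level-of-distribution input from Bombieri--Vinogradov, bounds each such contribution by $C(t) \sum_n w_n$, where $C(t)$ depends on the singular series of the enlarged tuple $\{a_1, \ldots, a_m, t\}$. If one can prove
\[
    \rho(m) \ - \ \sum_{t \notin \{a_1, \ldots, a_m\}} C(t) \ > \ k,
\]
then some $n$ (in fact infinitely many) must realize the resulting pointwise inequality, producing at least $k$ primes at admissible positions and \emph{no} primes at the excluded positions; any $k$ of the former are then automatically consecutive primes, which yields the desired subset $\{b_1, \ldots, b_k\}$.

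The main obstacle is reconciling two very different scales: the Maynard gain $\rho(m)$ is only logarithmic in $m$, whereas a naive Selberg bound over the $a_m - m$ extra positions contributes a total that is polynomial in $m$. The remedy, which is the heart of the Banks--Frieberg--Turnage-Butterbaugh argument, is twofold. First, choose the admissible set $\{a_1, \ldots, a_m\}$ as densely as possible (so that $a_m = O(m \log m)$ rather than polynomial in $m$), keeping the number of excluded positions small. Second, refine the upper-bound sieve by employing Maynard-type multidimensional weights for the excluded positions as well, so that the $C(t)$ factors average to something sub-logarithmic in $m$ rather than a constant. Once this refined uniform estimate is in place, choosing $m$ sufficiently large in terms of $k$ makes the gain exceed the loss, and the theorem follows.
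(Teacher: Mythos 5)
First, note that the paper does not prove this statement at all: it is imported verbatim from Banks--Freiberg--Turnage-Butterbaugh \cite{consecutive-primes} and used as a black box, so there is no in-paper proof to match your sketch against. Judged on its own terms, your sketch has a genuine gap at exactly the point where the real work happens. Your plan is to control primes at the excluded positions $t \in (0,a_m] \setminus \{a_1,\dots,a_m\}$ analytically, via an upper-bound sieve, and then deduce from an averaged inequality $\rho(m) - \sum_t C(t) > k$ that some $n$ has $k$ primes at tuple positions and \emph{no} primes at excluded positions. That deduction is not valid: the pointwise consequence of your averaged inequality is only $\sum_i \mathbf{1}_{\P}(n+a_i) > k + \sum_t \mathbf{1}_{\P}(n+t)$, which gives many primes at tuple positions but does not force the excluded positions to be prime-free. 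To force that, you would need to weight the excluded-position count by a factor exceeding $m$, turning the required inequality into $\rho(m) - m\sum_t C(t) > k$, which is hopeless against the logarithmic gain $\rho(m) = \Theta(\log m)$. You correctly flag the tension between the logarithmic gain and the polynomially many excluded positions, but the proposed remedy (denser tuples plus ``refined'' Maynard-type weights at excluded positions) is speculative and is not in fact how the cited proof resolves it.

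The actual argument of Banks--Freiberg--Turnage-Butterbaugh sidesteps the analytic difficulty entirely with an elementary congruence (``Erd\H{o}s covering'') device: for each excluded position $t$ one picks a distinct auxiliary prime $p_t$ with $p_t \nmid (a_i - t)$ for all $i$, and by the Chinese Remainder Theorem restricts $n$ to a residue class with $n \equiv -t \pmod{p_t}$ for every such $t$, while keeping $n + a_i$ coprime to all the auxiliary primes. Every integer in $[n, n+a_m]$ outside the tuple is then automatically composite, so one only needs the Maynard--Tao lower bound (run in this arithmetic progression) to produce $k$ primes among the $n+a_i$; these are consecutive for free. If you want a correct proof, you should replace your upper-bound-sieve step with this deterministic covering of the excluded positions.
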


\begin{lemma}\label{thm:prime-gap-ratios}
    For every $k$, there is a set of $k$ consecutive primes $p_1,p_2,\dots,p_k$ such that
    \[
        \frac{p_{i+3}-p_{i+2}}{p_{i+2}-p_{i_1}}
        < \frac{p_{i+2}-p_{i+1}}{p_{i+1}-p_i}
    \]
    for every $i \in \{1,2,\dots,k-3\}$. There is also a set of consecutive primes $q_1, q_2, \dots, q_k$ such that the opposite inequality
    \[
        \frac{q_{i+3}-q_{i+2}}{q_{i+2}-q_{i_1}}
        > \frac{q_{i+2}-q_{i+1}}{q_{i+1}-q_i}
    \]
    holds for every $i \in \{1,2,\dots,k-3\}$.
\end{lemma}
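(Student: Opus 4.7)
The plan is to apply \cref{thm:consecutive-primes} once for each case, each time to a carefully designed admissible set of size $m = m(k)$. The central idea is to choose an admissible set $\mathcal{A}$ so that \emph{every} $k$-element subset $\{a_{i_1} < \cdots < a_{i_k}\} \subseteq \mathcal A$ has extracted differences $G_j = a_{i_{j+1}} - a_{i_j}$ satisfying the desired ratio condition: $G_j G_{j+2} > G_{j+1}^2$ for the $q_i$ (increasing ratios) case, and $G_j G_{j+2} < G_{j+1}^2$ for the $p_i$ (decreasing ratios) case. Since \cref{thm:consecutive-primes} only guarantees the existence of \emph{some} $k$-subset that becomes consecutive primes, with no control over which, the ratio condition must hold for \emph{every} $k$-subset of $\mathcal A$.

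For the increasing-ratios case, I would take $\mathcal{A}_+ = \{2^{3^i} : 1 \leq i \leq m\}$. This is admissible: every element is a power of $2$, so modulo $2$ it is $\equiv 0$ (missing residue $1$) and modulo any odd prime it is coprime to that prime (missing residue $0$). For any $k$-subset, each extracted gap factors as
\[
    G_j \,=\, 2^{3^{i_{j+1}}} - 2^{3^{i_j}} \,=\, 2^{3^{i_{j+1}}}\bigl(1 - 2^{3^{i_j} - 3^{i_{j+1}}}\bigr),
\]
and the parenthetical factor is very close to $1$ because $3^{i_{j+1}} - 3^{i_j} \geq 2 \cdot 3^{i_j} \geq 6$ makes $2^{3^{i_j} - 3^{i_{j+1}}} \leq 2^{-6}$. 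Hence $G_j \in [\tfrac{63}{64} \cdot 2^{3^{i_{j+1}}},\, 2^{3^{i_{j+1}}}]$, and a short calculation gives
\[
    \frac{G_j G_{j+2}}{G_{j+1}^2} \;\geq\; \Bigl(\tfrac{63}{64}\Bigr)^2 \cdot 2^{3^{i_{j+1}} + 3^{i_{j+3}} - 2 \cdot 3^{i_{j+2}}}.
\]
Using $3^{i_{j+3}} \geq 3 \cdot 3^{i_{j+2}}$ gives $3^{i_{j+3}} - 2 \cdot 3^{i_{j+2}} \geq 3^{i_{j+2}}$, so the exponent is at least $3^{i_{j+1}} + 3^{i_{j+2}}$; this is positive (and in fact $\geq 12$), so the ratio comfortably exceeds $1$. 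Thus every $k$-subset of $\mathcal{A}_+$ has strictly log-convex extracted gaps, and \cref{thm:consecutive-primes} applied to $\mathcal A_+$ produces the desired $q_i$'s.

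For the decreasing-ratios case, the construction is parallel but substantially more delicate, and I expect it to be the main obstacle. The natural candidates---such as reversing $\mathcal{A}_+$ (e.g.\ $a_i = L - 2^{3^{m-i+1}}$), using polynomial growth $a_i = i^\alpha$, or using a near-arithmetic progression---all fail, because either a single summand still dominates each $G_j$ (producing log-\emph{convex} gaps for many subsets), or else the required inequality $(a_{p_2} - a_{p_1})(a_{p_4} - a_{p_3}) < (a_{p_3} - a_{p_2})^2$ breaks down for some non-consecutive $4$-tuple within the admissible set. What is needed is an $\mathcal{A}_-$ whose elements are balanced enough that no single term dominates any extracted gap, yet structured enough that this $4$-tuple log-concavity inequality holds for \emph{every} configuration that can appear as four consecutive entries of some $k$-subset. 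The natural approach is an inductive construction: having chosen $a_1, \dots, a_i$ satisfying all $4$-tuple inequalities, pick $a_{i+1}$ to be the largest integer simultaneously respecting admissibility (via residue constraints modulo small primes) and the finitely many inequalities of the form $a_{i+1} < a_{p_3} + (a_{p_3} - a_{p_2})^2 / (a_{p_2} - a_{p_1})$ coming from $4$-tuples with $p_4 = i+1$. The crux is proving that this construction can be carried out for $m = m(k)$ steps; the tightest constraints tend to come from $4$-tuples with small $p_3$ and $p_4 = i+1$, and a careful quantitative analysis should show that seeding the construction with a sufficiently spread-out initial segment gives enough slack to reach the required size.
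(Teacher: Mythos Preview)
Your handling of the increasing-ratio case ($q_i$) is essentially the paper's argument: both apply \cref{thm:consecutive-primes} to a doubly-exponential admissible set (you use $2^{3^i}$, the paper uses $3^{2^i}$), and the estimates establishing $G_jG_{j+2}>G_{j+1}^2$ for every extracted $4$-tuple are interchangeable.

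The decreasing-ratio case ($p_i$) is where your proposal has a genuine gap. You correctly observe that one needs an admissible $m$-set in which \emph{every} $4$-tuple $a_{p_1}<a_{p_2}<a_{p_3}<a_{p_4}$ satisfies $(a_{p_2}-a_{p_1})(a_{p_4}-a_{p_3})<(a_{p_3}-a_{p_2})^2$, and you sketch a greedy construction, but you do not prove it can be run for $m=m(k)$ steps. This is not a technicality: taking $p_1=1$, $p_2=j-1$, $p_3=j$, $p_4=m$ forces $(a_{j-1}-a_1)(a_m-a_j)<(a_j-a_{j-1})^2$ for every $j$, so the product of the total length to the left of $a_{j-1}$ and the total length to the right of $a_j$ is bounded by a single gap squared---an extremely rigid constraint whose feasibility for arbitrarily large $m$ you have not established. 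The paper, by contrast, dispatches this case in a single sentence: replace $a_i=3^{2^i}$ by $a_i=-3^{2^i}$, which is still admissible, and assert that ``this reverses the order of the gaps between primes, which also reverses the inequality between the consecutive gap ratios.'' You should be aware, though, that this reduction deserves scrutiny: the condition $d_id_{i+2}\lessgtr d_{i+1}^2$ is invariant under reversing the gap sequence, so negating the admissible set does not by itself convert log-convex extracted gaps into log-concave ones (for instance, with consecutive indices one gets $d_1=3^{16}-3^8$, $d_2=3^8-3^4$, $d_3=3^4-3^2$, and $d_1d_3\gg d_2^2$). Your instinct that this direction is the real obstacle was sound; the lemma as stated still needs a complete argument for the log-concave half.
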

\begin{proof}
    We start by proving the second inequality. The idea is to apply \cref{thm:consecutive-primes} and choose a sequence $(a_i)$ that grows so quickly that the inequality of ratio differences holds for any subset $\{b_1,\dots,b_k\} \subseteq \{a_1,\dots,a_m\}$.
    
    Given $k$, let $m$ be as in \cref{thm:consecutive-primes}, and set $a_i = 3^{2^i}$ for $1 \leq i \leq m$. This is an admissible set, because $3^{2^i}\not\equiv 0 \pmod{p}$ if $p\neq 3$ and $3^{2^i} \not\equiv 1 \pmod{3}$ for every $i \in \N$. \cref{thm:consecutive-primes} guarantees that there are indices $r(1) < r(2) < \dots < r(k)$ and an integer $n$ such that $n+a_{r(1)},\, n + a_{r(2)},\, \dots,\, n + a_{r(k)}$ are $k$ consecutive primes. We will call these numbers $q_1, q_2, \dots, q_k$, respectively. Using the fact that $\frac{a-1}{b-1} > \frac{a}{b}$ whenever $a > b$, we have
    \[
        \frac{q_{i+3} - q_{i+2}}{q_{i+2}- q_{i+1}}
        = \frac{3^{2^{r(i+2)}} \big(3^{2^{r(i+3)}-2^{r(i+2)}} - 1\big)}{3^{2^{r(i+1)}} \big(3^{2^{r(i+2)}-2^{r(i+1)}} - 1\big)}
        > \frac{3^{2^{r(i+2)}} \cdot 3^{2^{r(i+3)}-2^{r(i+2)}}}{3^{2^{r(i+1)}} \cdot 3^{2^{r(i+2)}-2^{r(i+1)}}}
        = 3^{2^{r(i+3)} - 2^{r(i+2)}}.
    \]
    On the other hand, a lower bound on the denominator yields that
    \[
        \frac{q_{i+2} - q_{i+1}}{q_{i+1}- q_{i}}
        < \frac{3^{2^{r(i+2)}} - 3^{2^{r(i+1)}}}{3^{2^{r(i+1)}-1}}
        < 3^{2^{r(i+2)} - 2^{r(i+1)} + 1}. 
    \]

    Using the fact that $2^{i+1} - 2^i > 2^i - 2^j + 1$ for any $j\geq 1$, we can chain everything together:
    \[
        \frac{q_{i+2} - q_{i+1}}{q_{i+1}- q_{i}}
        < 3^{2^{r(i+2)} - 2^{r(i+1)} + 1}
        < 3^{2^{r(i+2)} - 2^{1} + 1}
        < 3^{2^{r(i+3)} - 2^{r(i+2)}}
        < \frac{q_{i+3} - q_{i+2}}{q_{i+2}- q_{i+1}}.
    \]
    This proves the second claim in \cref{thm:prime-gap-ratios}.

    To prove the first claim, use numbers $a_i = -3^{2^i}$. This reverses the order of the gaps between primes, which also reverses the inequality between the consecutive gap ratios.
\end{proof}

Together, \cref{thm:product-sets-lower-bound} and \cref{thm:prime-gap-ratios} prove the Main Theorem.

\section{Related problems}\label{sec:problems}

In this area, there are open problems aplenty, because coming up with a new problem is as easy as choosing a point set. Here are a few specific problems.

In the same paper where they conjecture that $h(\P^2) = \infty$, De Loera, La Haye, Oliveros, and Rold\'an-Pensado observe that $h\big((\Z\setminus \P)^2\big)$ is finite. We can provide a short proof of this fact using the ``flatness theorem\slimcomma'' first proved by Khinchine in 1948 \cite{khinchine}. Arun and Dillon used a similar approach to bound the number of vertices of certain classes of so-called \emph{hollow polytopes} \cite{Arun-empty-polytopes}.

A \emph{lattice hyperplane} of a lattice $L$ is a hyperplane that passes through a point in $L$.

\begin{theorem*}[Flatness theorem]
    For any convex body $K$ in $\R^d$ that does not contain any lattice points, there is a vector $v$ such that at most $f(d)$ lattice hyperplanes orthogonal to $v$ intersect $K$.
\end{theorem*}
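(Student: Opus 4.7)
The plan is to recast the theorem in terms of the \emph{lattice width} of $K$ and then apply transference tools from the geometry of numbers. For a primitive $v$ in the dual lattice $L^*$, define $w_v(K) = \max_{x \in K}\langle v, x\rangle - \min_{x \in K}\langle v, x\rangle$; the number of lattice hyperplanes orthogonal to $v$ that meet $K$ is at most $\lfloor w_v(K)\rfloor + 1$. So it suffices to exhibit, for every lattice-free convex body $K$, a primitive $v \in L^*$ with $w_v(K)$ bounded by a function of $d$ alone.

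First I would reduce to the case of an ellipsoid via John's theorem, which provides an ellipsoid $E$ with $E \subseteq K \subseteq d \cdot E$ (the dilation taken around the center of $E$). If $K$ is lattice-free, then $E$ is as well, and since widths scale linearly $w_v(K) \leq d \cdot w_v(E)$ for every $v$. So it suffices to prove the flatness theorem for ellipsoids, losing only a factor of $d$ in the final constant.

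For an ellipsoid, write $E = c + T(B)$, where $B$ is the unit ball and $T$ is a positive-definite linear map. Lattice-freeness of $E$ implies the covering-radius bound $\mu(T^{-1}L, B) > 1$: the shifted lattice $T^{-1}L - T^{-1}c$ avoids $B$, so no lattice point of $T^{-1}L$ lies within distance $1$ of $T^{-1}c$. By the transference inequality between covering radius and dual first minimum, $\mu(T^{-1}L, B)\cdot \lambda_1\bigl((T^{-1}L)^*, B\bigr) \leq d$, so $\lambda_1\bigl((T^{-1}L)^*, B\bigr) < d$. A short vector $w \in (T^{-1}L)^*$ realizing this first minimum pulls back to $v = (T^T)^{-1} w \in L^*$, and a direct computation gives $w_v(E) = 2|T^T v| = 2|w| \leq 2d$. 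Chaining with the John reduction yields $w_v(K) \leq 2 d^2$.

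The main obstacle is the transference inequality in the previous paragraph. Sharp versions relating the covering radius of a lattice to the first minimum of its dual require serious effort (Banaszczyk's argument uses Gaussian integration to give a bound of order $d$), but even coarser polynomial-in-$d$ bounds are sufficient for the qualitative existence of $f(d)$ asserted by the theorem. Once that inequality is granted, the proof proceeds by the sequence of standard geometry-of-numbers manoeuvres sketched above.
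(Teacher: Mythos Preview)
Your sketch is a correct outline of a standard modern proof of the flatness theorem: reduce to bounding the lattice width, pass to an inscribed John ellipsoid, and then invoke a transference inequality between the covering radius of a lattice and the first successive minimum of its dual. The computations you indicate (pulling the short dual vector back through $T^T$ and reading off $w_v(E)=2|w|$) are right, and you correctly flag the transference step as the only nontrivial ingredient.

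However, there is nothing in the paper to compare this against. The paper does not prove the flatness theorem; it merely states it as a known result, attributes it to Khinchine (1948), and then \emph{applies} it as a black box in the proof of \cref{thm:prime-complement-helly}. So your proposal is not competing with any argument in the paper---you have supplied a proof where the paper simply gives a citation. If the intent was to prove the statement the paper actually proves in this section (that $h\big((\Z\setminus\P)^2\big)$ and $h(\Z^2\setminus\P^2)$ are finite), then the relevant argument is the short one immediately following the flatness theorem, which uses flatness applied to $\tfrac14 P$ together with the observation that each lattice line meets a convex polygon in at most two vertices.
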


\newpage

\begin{proposition}\label{thm:prime-complement-helly}
    Both $h\big((\Z \setminus \P)^2\big)$ and $h(\Z^2 \setminus \P^2)$ are finite.
\end{proposition}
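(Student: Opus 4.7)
Both statements will be proved by bounding the number of vertices of any empty polygon in the relevant set; the Helly number then follows from the empty-polytope theorem cited above. I focus on the first part; the second is analogous and in fact simpler.

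Let $P$ be an empty polygon in $(\Z \setminus \P)^2$, with horizontal width $W$ and vertical width $H$. Every vertex of $P$ is a lattice point with both coordinates non-prime, while every other lattice point of $P$ must lie on one of the ``prime lines'' $\{x = p\}$ or $\{y = p\}$ for some prime $p$. The plan is to show that $\min(W,H)$ is bounded by an absolute constant $C$; the vertices of $P$ will then lie on at most $C+1$ non-prime lattice lines in the narrow direction, with at most two vertices per line, so $|V(P)| \leq 2(C+1)$.

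For the width bound, I would appeal to the flatness theorem applied to the lattice-free ``pieces'' of $P$. Removing an $\epsilon$-neighborhood of each prime line from $\Int(P)$ leaves a disjoint union of convex, lattice-free open regions, because all interior lattice points of $P$ lie on prime lines; the flatness theorem then furnishes a primitive direction of bounded lattice width in the largest such piece. The main obstacle is the final step: lifting this bound on a sub-region to a width bound on $P$ itself, which requires controlling the width that $P$ can accumulate across the prime gaps that the sub-regions straddle. An elementary alternative sidesteps this issue by a direct slice argument: if $H$ exceeds a small constant, a non-prime integer $y_0$ sits in the middle of $P$'s vertical range, and by concavity the horizontal slice $P \cap \{y = y_0\}$ has length comparable to $W$; every integer point strictly inside this slice would need a prime $x$-coordinate, but no three consecutive integers are all prime, so $W$ must be bounded.

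For $\Z^2 \setminus \P^2$, interior lattice points of an empty polygon are confined to the very sparse set $\P^2$, so the same approach works with even more slack. The slice argument becomes stronger: at a non-prime integer $y_0$, the horizontal slice of $P$ cannot contain any interior lattice point at all, forcing its length below $1$ and hence $W$ below a small absolute constant. Equivalently, one can apply the flatness theorem after excising small disks (rather than entire lines) around each point of $\P^2 \cap \Int(P)$, where the transfer from the sub-region bound to $P$ is simpler because the excised points are isolated.
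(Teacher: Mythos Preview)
Your overall strategy---bound the number of vertices of an empty polygon via some width bound and then count vertices along parallel lattice lines---is the same as the paper's. But the slice argument you fall back on for the width bound has a genuine gap, and the quantity you aim to control is not quite the right one.

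The claim that ``by concavity the horizontal slice $P \cap \{y = y_0\}$ has length comparable to $W$'' is false. Concavity of the slice-length function does not give this: for instance, the triangle with vertices $(0,0)$, $(W,H)$, and $(W-\epsilon,H)$ has horizontal extent $W$ and vertical extent $H$, yet its horizontal slice at height $H/2$ has length only $\epsilon/2$. So bounding the slice length at one non-prime height $y_0$ does not bound $W$, and the same objection breaks your argument for $\Z^2 \setminus \P^2$. More broadly, it is not clear that $\min(W,H)$ is even the correct invariant: what the flatness theorem actually controls is the lattice width in \emph{some} primitive direction, which need not be a coordinate direction---and this is precisely the difficulty you already flagged in your ``pieces'' approach.

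The paper sidesteps all of this with one observation you are missing: since no multiple of $4$ is prime, the entire sublattice $(4\Z)^2$ lies inside both $(\Z \setminus \P)^2$ and $\Z^2 \setminus \P^2$. Thus the interior of any empty polygon $P$ contains no point of $(4\Z)^2$, so $\tfrac14 P$ has lattice-free interior and the flatness theorem applies to it directly. This yields at most $4\,f(2)$ parallel $\Z^2$-lattice lines (in the flatness direction, whatever it is) meeting $P$, hence at most $8\,f(2)$ vertices---no slicing, no patching of sub-regions, and no need to privilege the coordinate axes.
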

\begin{proof}
    If $P$ is an empty polytope in $(\Z \setminus \P)^2$, then only the vertices of $P$ can intersect $(4\Z)^2$. Thus, $\frac{1}{4} P$ is a convex body whose interior does not intersect $\Z^2$. By the flatness theorem, there is a direction $v$ with at most $4\, f(2)$ lattice lines of $\Z^2$ that are orthogonal to $v$ and intersect $P$. Each line contains at most 2 vertices of $P$, and each vertex is contained in one line, so $P$ has at most $8\,f(2)$ vertices.

    The same argument holds for $\Z^2 \setminus \P^2$.
\end{proof}

Corollary 1 in \cite{flatness-dim-2} implies that $f(2) = 3$, so $h(S) \leq 24$ for $S = (\Z \setminus \P)^2$ and $S = \Z^2 \setminus \P^2$. Using the results in \cite{flatness-dim-2} and the slightly more sophisticated notion of ``lattice width'' in the proof of \cref{thm:prime-complement-helly} improves this bound to $h(S) \leq 18$ for both sets.

\begin{problem}
    Evaluate $h\big((\Z \setminus \P)^2\big)$ and $h(\Z^2\setminus \P^2)$. What are $h\big((\Z \setminus \P)^d\big)$ and $h(\Z^d \setminus \P^d)$ for $d\geq 3$?
\end{problem}

As for other point sets, Ambrus, Balko, Frankl, Jung, and Nasz\'odi \cite{Ambrus-exponential-lattice} recently proved that the exponential grid $\{\alpha^n : n \in \N\}^2$ has finite Helly number for any $\alpha > 1$. Arun and Dillon \cite{Arun-empty-polytopes} slightly improved the upper bound, but both papers rely on an argument that doesn't easily generalize to higher dimensions.

\begin{problem}
    For which $\alpha > 1$ and $d \geq 3$ is $h\big( \{\alpha^n : n \in \N\}^d\big)$ finite? In particular, does $\{2^n : n \in \N\}^3$ have finite Helly number?
\end{problem}

\null
\begin{center}
    \large\scshape acknowledgments
\end{center}
\noindent
I gladly thank Pablo Sober\'on for first discussing this problem with me, Ashwin Sah for an insightful conversation about it, and Henry Cohn for reviewing a draft of this paper. This work was partially supported by a National Science Foundation Graduate Research Fellowship under Grant No. 2141064.\vspace{1.5\baselineskip}

\let\OLDthebibliography\thebibliography
\renewcommand\thebibliography[1]{
  \OLDthebibliography{#1}
  \setlength{\parskip}{0pt}
  \setlength{\itemsep}{3pt plus 0.3ex}
}

\bibliography{bibliography.bib}
\bibliographystyle{amsplain-nodash}

\vspace{2\baselineskip}

\noindent
% {\small \textsc{Travis Dillon}}\\
{\small \textsc{Department of Mathematics, Massachusetts Institute of Technology, Cambridge, MA, USA}}\\
\textit{email:} \texttt{dillont@mit.edu}

\end{document}